\newcommand{\qbin}[2]{\genfrac{[}{]}{0pt}{}{#1}{#2}}
\newcommand{\gp}[3]{\qbin{#1}{#2}_{#3}}
\newcommand{\Bin}{\mathrm{B\textsc{in}}}
\newcommand{\inv}{\mathrm{inv}}
\newcommand{\Mult}{\mathrm{M\textsc{ult}}}
\newcommand{\Comp}{\mathscr{C}}
\newcommand{\Yv}{\mathbf{Y}}
\newcommand{\yv}{\mathbf{y}}
\newcommand{\Xv}{\mathbf{X}}
\newcommand{\xv}{\mathbf{x}}
\newtheorem{theorem}{Theorem}
\newtheorem{lemma}[theorem]{Lemma}
\theoremstyle{remark}
\newtheorem{remark}[theorem]{Remark}
\numberwithin{theorem}{section}
\numberwithin{equation}{section}
\title{A Refinement of the 
Multinomial Distribution\\with Application}
\author{Andrew V. Sills\\
Department of Mathematical Sciences,\\
Georgia Southern University, Statesboro and Savannah, Georgia, USA\\
email: asills@georgiasouthern.edu}
\date{\today}
\begin{document}
\maketitle

\centerline{Key words: multinomial distribution; $q$-multinomial theorem,
permutation inversion statistic}
\centerline{MSC: 60E05, 05A05}

\begin{abstract}
A refinement of the multinomial distribution is presented where the number of
inversions in the sequence of outcomes is tallied.  This refinement of the multinomial
distribution is its joint distribution with the number of inversions in the 
accompanying experiment.  An application of this additional
information is described in which the number of inversions acts as a proxy 
measure of homogeneity (or lack thereof) in the sequence of outcomes.
\end{abstract}


\section*{Competing Interests Statement}
The author has no competing interests to declare that are relevant to the content of this article.
No funds, grants, or other support was received.

\section{Preliminaries} 
The author sets out to establish a refinement of the classical multinomial 
probability distribution that generalizes his earlier work~\citep{S21} and here
 draws inspiration from the non-commutative $q$-multinomial
theorem of~\cite{F07}.  This multinomial distribution generalization is
different that of~\cite{C21}, which makes use of the commutative
$q$-multinomial theorem.

Recall the  classical multinomial experiment:
\begin{itemize}
   \item The experiment consists of $n$ independent, identical trials;
   \item there are $k$ possible outcomes on each trial, labeled outcome $1,2,\dots, k$;
   \item on a given trial, the probability that outcome $i$ occurs is the constant $p_i$.
\end{itemize}
Define the random variable $Y_i$ as the number of trials that result in outcome $i$.
Let $\Yv = (Y_1, Y_2, \dots, Y_k)^T$ and $\yv = (y_1, y_2, \dots, y_k)^T$.

The joint pmf for $\Yv$ is
\[ P(\Yv=\yv) = \binom{y_1 + y_2 + \cdots + y_k}{y_1, y_2, \dots, y_k} 
p_1^{y_1} p_2^{y_2} \cdots p_k^{y_k}, \]
for all nonnegative integers $y_1, y_2, \dots, y_k$ that sum to $n$, and $0$
otherwise;
where 
\begin{equation}\label{mc}
  \binom{y_1 + y_2 + \cdots + y_k}{y_1, y_2, \dots, y_k}
  = \frac{(y_1 + y_2 + \cdots + y_k)!}{y_1! y_2! \cdots y_k!} 
 \end{equation} is
the multinomial co\"efficient and
$ \sum_{i=1}^k p_i = 1. $ 
In this case, we may write $\Yv\sim\Mult(n; p_1, p_2, \dots, p_{k-1})$ noting
that once $p_1, p_2, \dots, p_{k-1}$ are specified, it must be the case that
$p_k = 1 - \sum_{i=1}^{k-1} p_i$.

A \emph{weak $k$-composition of $n$} is a $k$-tuple of nonnegative integers
whose components sum to $n$.  For example, 
there are $10$ weak $3$-compositions of $3$:
namely
\[ (0,0,3) \quad (0,1,2) \quad (0,2,1) \quad (0,3,0) \quad (1,0,2) \]
\[ (1,1,1) \quad (1,2,0) \quad (2,0,1) \quad (2,1,0) \quad (3,0,0). \]  In general,
the number of weak $k$-compositions of $n$ is $\binom{n+k-1}{k-1}$.
Denote the set of weak $k$-compositions of $n$ by $\Comp_k(n)$.

The weak $k$-compositions of $n$ comprise the support of the random
vector $\Yv$, where $\Yv\sim\Mult(n; p_1, p_2, \dots, p_{k-1})$.

A standard reference on integer compositions is~\cite{HM09}.

The \emph{$q$-multinomial co\"efficient} (or 
\emph{Gaussian multinomial co\"efficient}) is given by
\begin{equation} \label{qmc}
   \gp{y_1 + y_2 + \cdots + y_k}{y_1, y_2, \dots, y_k}{q}  = 
   \frac{(q)_{y_1 + y_2+ \cdots + y_k}}{(q)_{y_1} (q)_{y_2} \cdots (q)_{y_k}},
\end{equation}
where $(q)_j := (1-q)(1-q^2)\cdots (1-q^j).$
 Note that the $q$-multinomial co\"efficient in~\eqref{qmc} is a polynomial in $q$
of degree \[ \sum_{1\leq i< j \leq k} y_i y_j  \] and if $q$ is set to $1$ in~\eqref{qmc}, then the ordinary multinomial
co\"efficient~\eqref{mc} is recovered.

Let us denote the multiset
$M = (a_1, a_2, \dots, a_n)$, where the first $m_1$ elements equal $1$,
the next $m_2$ elements equal $2$, the next $m_3$ elements equal $3$,
\dots, the last $m_k$ elements equal $k$, and $n = m_1 + m_2 + \cdots + m_k$,
by the notation $\{ 1^{m_1} 2^{m_2} \cdots k^{m_k} \}$.
A permutation $(\xi_i, \xi_2, \dots, \xi_n)$ of $M$ is said to have $i$ 
\emph{inversions} if there are exactly $i$ pairs $(\xi_i, \xi_j)$ such that $i<j$ 
and $\xi_i > \xi_j$.    The number of permutations of $M$ having exactly $i$
inversions is denoted $\inv(m_1, m_2, \cdots, m_k ; i)$.

 The $q$-multinomial co\"efficient figures prominently in the theory of
permutations of the multiset $\{ 1^{m_1} 2^{m_2} \cdots k^{m_k} \}$.  In fact,
\cite{M17} proved that 
\begin{equation} \label{invgf} 
\sum_{i\geq 0} \inv(y_1, y_2, \dots, y_k ; i) q^i = \
\gp{y_1 + y_2 + \cdots + y_k}{y_1, y_2, \dots, y_k}{q},
\end{equation} i.e. that the $q$-multinomial co\"efficient is the ordinary 
power series generating function for the function that counts the number of
inversions in the multiset $M$ defined above.

  By setting $q=1$ in~\eqref{invgf}, it is immediate that
\begin{equation} \label{numinv}
  \sum_{i\geq 0} \inv(y_1, y_2, \dots, y_k; i) = 
  \binom{y_1 + y_2 + \cdots  y_k}{y_1, y_2, \dots, y_k}.
\end{equation}

  For more on the $q$-multinomial co\"efficients, see section 3.3 of~\cite{A76}.

\section{A refined multinomial distribution with mathematical motivation}
\subsection{A special case}
To motivate the refinement about to be proposed, let us consider in detail the
specific example where $(Y_1, Y_2, Y_3)\sim\Mult(3; p_1, p_2)$; of course,
$Y_3 = 3 - Y_1 - Y_2$ and $p_3 = 1 - p_1 - p_2$.  That is, we are
considering $n=3$ trials with $k=3$ possible outcomes on each
trial.  In the associated 
multinomial experiment,  there are $k^n = 3^3 = 27$ possible outcomes of the 
$n=3$ trials.   Each outcome is enumerated explicitly in the following table:

\begin{center}
\begin{tabular}{ | c | c | c | c | }
\hline
$\yv $      & $P(\Yv = \yv)$ & outcome(s) & $\inv$ \\
\hline \hline
$(0,0,3)$ &  $p_3^3$            &  $333$  & 0 \\
\hline
                &                           & $233$  & 0 \\
$(0,1,2)$  &  $3 p_2 p_3^2$ & $323$   & 1 \\
                &                           & $332$   & 2 \\ 
\hline
                &                           & $223$  & 0 \\
$(0,2,1)$  &  $3 p_2^2 p_3$ & $232$  & 1\\
                &                           & $322$  & 2\\ 
\hline
$(0,3,0)$  & $p_2^3$            & $222$  & 0 \\
\hline
                &                           & $133$  & 0 \\
$(1,0,2)$  &  $p_1 p_3^2$    & $313$  & 1\\
                &                            & $331$ & 2\\
\hline                             
                &                            & $123$  & 0 \\
                &                            & $132$  & 1\\
$(1,1,1)$ &  $6 p_1 p_2 p_3$ & $213$    & 1\\     
                &                            & $231$   & 2 \\
                &                            & $312$   & 2\\
                &                            & $321$    & 3 \\
\hline
                &                           & $122$  & 0 \\
$(1,2,0)$  &  $3 p_1 p_2^2$ & $212$  & 1\\
                &                           & $221$  & 2\\
\hline   
                &                           & $113$  & 0 \\
$(2,0,1)$  &  $3 p_1^2 p_3$ & $131$   & 1 \\
                &                           & $311$   & 2 \\ 
\hline
               &                           & $112$  & 0 \\
$(2,1,0)$  &  $3 p_1^2 p_2$ & $121$   & 1 \\
                &                           & $211$   & 2 \\ 
\hline
$(3,0,0)$  & $p_1^3$               & $111$   & 0 \\
   \hline                                                                       
\end{tabular}
\\
Table 1. The distribution $\Mult(3; p_1, p_2)$ refined with inversion count
\end{center}
  If we ignore the last column, we have the classical multinomial distribution, where
the number of each of the $k$ possible outcomes is recorded over the $n$ trials.
However, the last column suggests that we could additionally keep track of the
number of inversions, where we view the sequence of outcomes across the 
$n = 3$ trials as some permutation of $\{ 1^{y_1} 2^{y_2} 3^{y_3} \}$.  

\subsection{The random variable that counts the number of inversions in the
outcomes of the sequence of trials, and the resulting joint pmf}

Let $I$ denote the random variable that records the number of inversions in
outcome of the multinomial experiment and let $\Xv = (Y_1, Y_2,\dots, Y_k, I)$,
a random vector with $k+1$ components.   Then it seems reasonable to
define the joint pmf 
\begin{equation} \label{YIpmf}
  P(\Xv = \xv) =
  P(\Yv = \yv, I=i)
  = \inv(y_1, y_2, \dots, y_k; i) p_1^{y_1} p_2^{y_2} \cdots p_k^{y_k},
\end{equation} for $\yv \in \Comp_k(n)$ and nonnegative integers $i$; and $0$ otherwise.

\begin{remark}
Actually, the support of $I$ is \emph{not} all nonnegative integers $i$, 
although if a value of $i$
that is out of bounds is selected, the value of $\inv(y_1, \dots, y_k; i)$ will equal $0$,
so no real harm done by extending $i$ over all nonnegative integers.  
For a fixed $n$ and $k$, it appears that the maximum $i$ for
which $\inv(y_1, \dots, y_k; i) > 0$ is $\lfloor (k-1)n^2/(2k) \rfloor$. 
\end{remark}

  Notice that~\eqref{YIpmf} is a mathematically legitimate joint pmf, as
$P(\Yv = \yv, I=i) \geq 0$ for all $\yv$ and $i$, and
\begin{align*}
\sum_{\yv\in\Comp_k(n)} \sum_{i\geq 0} P(\Yv = \yv, I=i) 
& = \sum_{\yv\in\Comp_k(n)} \sum_{i\geq 0}
 \inv(y_1,  \dots, y_k; i) p_1^{y_1} \cdots p_k^{y_k} \\
 & = \sum_{\yv\in\Comp_k(n)} p_1^{y_1} \cdots p_k^{y_k} \sum_{i\geq 0}
 \inv(y_1,  \dots, y_k; i) \\
 & = \sum_{\yv\in\Comp_k(n)} p_1^{y_1} \cdots p_k^{y_k} 
 \binom{y_1 + \cdots + y_k}{y_1, \dots, y_k} \\ & = 1.
\end{align*}

From this perspective, the previously considered $n=k=3$ case can be tabulated
as follows, where each cell contains the relevant sequence of outcomes, and
the probability of that particular value of $(Y_1, Y_2, Y_3)$ and $I$:

\begin{center}
\begin{tabular}{c|cccc}
\hline \hline
    &     &   $I$ &  & \\
   $\yv$ & 0 & 1 & 2 & 3  \\
\hline\hline
(0,0,3) &  333         & --- & --- & --- \\
   &  $p_3^3$ & 0  & 0   & 0    \\
\hline
(0,1,2)  &  233        & 323  & 332 & ---  \\
    & $p_2 p_3^2$ & $p_2 p_3^2$ & $p_2 p_3^2$ &  $0$ \\
\hline    
(0,2,1)  &  223        & 232  & 322 & ---  \\
    & $p_2^2 p_3$ & $p_2^2 p_3$ & $p_2^2 p_3$ &  $0$ \\
\hline
(0,3,0)  & 222        & --- & --- & --- \\
            & $p_2^3$ &  0  &  0 &  0\\
 \hline    
(1,0,2) & 133 & 313 & 331 & --- \\
            & $p_1 p_3^2$       &  $p_1 p_3^2$ &  $p_1 p_3^2$ &  0\\
 \hline  
 (1,1,1) & 123 & 132, 213 & 231, 312 & 321 \\
             & $p_1 p_2 p_3$ &     $2p_1 p_2 p_3$    & $2p_1 p_2 p_3$  & $p_1 p_2 p_3$ \\
 \hline   
(1,2,0) & 122 & 212 & 221 & --- \\
            & $p_1 p_2^2$ &   $p_1 p_2^2$ &  $p_1 p_2^2$ &    0 \\
 \hline
(2,0,1)  & 113 & 131 & 311 & --- \\
             & $p_1^2 p_3$     & $p_1^2 p_3$  & $p_1^2 p_3$ & 0 \\
\hline       
(2,1,0) & 112 & 121 & 211 & --- \\
           & $p_1^2 p_2$  & $p_1^2 p_2$ & $p_1^2 p_2$ & 0 \\
 \hline
 (3,0,0) & 111 & --- & --- & --- \\
             & $p_1^3$ & 0 & 0 & 0 \\
             \hline          
\end{tabular}\\
Table 2. The joint distribution of $(\mathbf{Y}, I)$.
\end{center}
Notice how the co\"efficient of probability expression of the cell in row 
$\Yv = (y_1,y_2,y_3)$
and column $I=i$ equals the co\"efficient of $q^i$ in the 
expansion of the
$q$-multinomial
co\"efficient $\gp{y_1+y_2+y_3}{y_1,y_2,y_3}{q}. $ 
For instance, $\gp{3}{1,1,1}{q} = 1 + 2q + 2q^2 + q^3$, as a
demonstration of Eq.~\eqref{invgf}.   

\subsection{Farsi's noncommutative $q$-multinomial theorem}
For additional mathematical motivation, 
recall the non-commutative $q$-multinomial theorem of~\cite{F07}.  In particular, 
consider~\cite[p. 1539, Example 2.8]{F07}, with each $\rho_j$ set 
equal to $q$, and notation adjusted for our present purposes: Let $n$ denote a 
positive integer and let
$x_1, x_2, \dots, x_k$ denote multiplicatively noncommutative indeterminates
that satisfy the $q$-commutation relation 
\begin{equation} \label{qcommute} 
x_j x_i = q x_i x_j  \mbox{  whenever } i<j; \end{equation} then
\begin{equation}\label{qMT}
(x_1 + x_2 + \cdots + x_k)^n = \sum_{\yv\in\Comp_k(n)}
\gp{y_1 + y_2 + \cdots + y_k}{y_1, y_2, \dots, y_k}{q}
 x_1^{y_1} x_2^{y_2} \cdots x_k^{y_k}.
\end{equation}
The left member of~\eqref{qMT}, upon expansion, encodes all
possible permutations of $\{ 1^{y_1} 2^{y_2} \cdots k^{y_k} \}$ for all ordered
$k$-tuples of nonnegative integers $(y_1, \dots, y_k)$ that sum to $n$, i.e.
all weak $k$-compositions of $n$, as a sum of $k^n$ terms, each 
consisting of a noncommutative product of  $n$ $x_i$'s,
in every possible order.    The right member expresses an algebraically 
equivalent expression, taking into account the $q$-commutation relation
\eqref{qcommute}, where the $x_i$'s appear in their ``natural'' order, i.e.
with increasing subscripts.   
   In the $n=k=3$ case, the left member of~\eqref{qMT} expands as these
   $27$ terms:
\begin{multline}
 x_3 x_3 x_3 + x_2 x_3 x_3 + x_3 x_2 x_3 + x_3 x_3 x_2 + x_2 x_2 x_3 + x_2 x_3 x_2 +
 x_3 x_2 x_2 + x_2 x_2 x_2 \\
 + x_1 x_3 x_3 + x_3 x_1 x_3 + x_3 x_3 x_1 + x_1 x_2 x_3 + x_1 x_3 x_2 + x_2 x_1 x_3
 + x_2 x_3 x_1 + x_3 x_1 x_2 \\
 + x_3 x_2 x_1 + x_1 x_2 x_2 + x_2 x_1 x_2 + x_2 x_2 x_1 + x_1 x_1 x_3 + x_1 x_3 x_1
  + x_3 x_1 x_1 \\
  + x_1 x_1 x_2 + x_1 x_2 x_1 + x_2 x_1 x_1 + x_1 x_1 x_1,
\end{multline}
where the set of possible outcomes is enumerated by following the subscripts.
In contrast, the right member of~\eqref{qMT} in the $n=k=3$ case expands as
\begin{multline}
  (1) x_3^3  + (1+q+q^2) x_2 x_3^2 + (1+ q + q^2) x_2^2 x_3 + (1) x_2^2 + 
  (1+q+q^2) x_1 x_3^2 \\
  + (1+2q + 2q^2 + q^3) x_1 x_2 x_3 + (1+q+q^2) x_1 x_2^2 + (1+q+q^2) x_1^2 x_3\\
  + (1+q+q^2) x_1^2 x_2 + (1)x_1^3.
\end{multline} Here, the number of inversions required to transform
$x_1^{y_1} x_2^{y_2} \dots x_k^{y_k}$ into the permuted form as required by
\eqref{qcommute} is recorded in each exponent of $q$.

  If~\eqref{invgf} is applied to the right member of~\eqref{qMT}, the result is:
 \begin{multline} \label{convert}
 \sum_{\yv\in\Comp_k(n)}
\gp{y_1 + y_2 + \cdots + y_k}{y_1, y_2, \dots, y_k}{q}
 x_1^{y_1} x_2^{y_2} \cdots x_k^{y_k} \\ =
  \sum_{\yv\in\Comp_k(n)}
\sum_{i\geq 0} \inv(y_1, \dots, y_k; i) q^i
 x_1^{y_1} x_2^{y_2} \cdots x_k^{y_k} 
 \end{multline}

Consider the generic summand of the right member of~\eqref{convert}.
Substitute $x_i = p_i$ for $i = 1, 2, \dots, k$, and take $q=1$.  
The resulting expression is the joint pmf of $\Yv$ and $I$ 
summed over its support~\eqref{YIpmf}.

\subsection{Some Marginal distributions}
  Clearly, we have $\Yv\sim\Mult(n; p_1, p_2, \dots, p_{k-1})$ as this was our
starting point before introducing the random variable $I$.  Additionally, 
$Y_j \sim \Bin(n, p_j)$ for each $j$, $1\leq j \leq k$, and
accordingly for any such $i$, the joint pmf of $Y_j$ and $I$ is
\begin{equation}
   P(Y_j = y_j, I=i) =  \inv( y_j, n - y_j   ; i) p_j^{y_j} (1 - p_j)^{n - y_j},
\end{equation} for $0 \leq y_j \leq n$ and $0 \leq i \leq \lfloor n^2/4 \rfloor$, and $0$
otherwise.  
This case was studied in~\cite{S21}.

\section{A summation lemma to aid calculations of moments}

\begin{lemma} \label{lem}
Let $y_1, y_2, \dots, y_k$ be fixed nonnegative integers.
\begin{equation} \label{mom1}
\sum_{i\geq 0} i \cdot \inv(y_1, \dots, y_k; i) 
 = \frac 12\binom{y_1 + \cdots + y_k}{y_1, \cdots, y_k} 
     \sum_{1\leq i < j \leq k} y_i y_j 
\end{equation}
\begin{multline}
\label{mom2}
\sum_{i\geq 0} i^2 \cdot \inv(y_1, \dots, y_k; i) \\
  =  \binom{y_1+\cdots+y_k}{y_1,\cdots,y_k}  \left( 
  \frac{1}{12}  \sum_{1\leq i < j \leq k} y_i y_j 
  + \frac{1}{12} \sum_{1\leq i < j \leq k} y_i^2 y_j
  + \frac{1}{12} \sum_{1\leq i < j \leq k} y_i y_j^2 \right. \\
  + \frac{1}{4} \sum_{1\leq i < j \leq k} y_i^2 y_j^2 
  + \frac{1}{6} \sum_{1\leq h<i<j\leq k} y_h y_i y_j  
  + \frac{1}{2} \sum_{1\leq h<i<j\leq k} y_h^2 y_i y_j \\
  +
      \left. \frac{1}{2} \sum_{1\leq h<i<j\leq k} y_h y_i^2 y_j
    + \frac{1}{2} \sum_{1\leq h<i<j \leq k} y_h y_i y_j^2 
    + \frac{3}{2} \sum_{1\leq g<h<i<j \leq k} y_g y_h y_i y_j \right) \\
 = \frac{1}{12} \binom{y_1+\cdots+y_k}{y_1,\cdots,y_k}  \left\{
   \sum_{1\leq i < j \leq k} y_i y_j 
  +  \left(  \sum_{1 \leq h \leq i < j \leq k}  
                             + \sum_{1 \leq h < i \leq j \leq k} \right) y_h y_i y_j  \right. \\
    + 6  \left( 
        \sum_{1\leq g \leq h < i < j \leq k} 
      +\sum_{1\leq g < h \leq i < j \leq k} 
      + \sum_{1\leq g < h < i \leq j \leq k} \right) y_g y_h y_i y_j  \\
   \left.   +  3\sum_{1\leq g = h < i = j  \leq k} y_g y_h y_i y_j \right\}.                  
\end{multline}
 \end{lemma}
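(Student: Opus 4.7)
My plan is to reinterpret both sums probabilistically. By Mahon's identity~\eqref{invgf} the polynomial $P(q):=\gp{n}{y_{1},\ldots,y_{k}}{q}$ equals $\sum_{i\ge 0}\inv(y_{1},\ldots,y_{k};i)\,q^{i}$, with $P(1)=M:=\binom{n}{y_{1},\ldots,y_{k}}$. Consequently $P'(1)=\sum_{i}i\cdot\inv(\ldots;i)$ and $P''(1)+P'(1)=\sum_{i}i^{2}\cdot\inv(\ldots;i)$, so the two asserted identities may be recast as formulas for $M\cdot E[I]$ and $M\cdot E[I^{2}]$, where $I$ is the inversion count of a uniformly random permutation $(\xi_{1},\ldots,\xi_{n})$ of the multiset $\{1^{y_{1}}\cdots k^{y_{k}}\}$. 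I then decompose $I=\sum_{a<b}N_{ab}$, where $N_{ab}$ counts the pairs of positions $(s,t)$ with $s<t$, $\xi_{s}=b$, $\xi_{t}=a$.

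For~\eqref{mom1} I restrict attention to the $y_{a}+y_{b}$ positions whose label lies in $\{a,b\}$: the sub-arrangement of $a$'s and $b$'s there is uniform among the $\binom{y_{a}+y_{b}}{y_{a}}$ possibilities, whence $E[N_{ab}]=y_{a}y_{b}/2$. Summing over $a<b$ and multiplying by $M$ yields~\eqref{mom1}.

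For~\eqref{mom2} I expand $E[I^{2}]=\sum_{a<b}\sum_{c<d}E[N_{ab}N_{cd}]$ and split by $|\{a,b\}\cap\{c,d\}|\in\{0,1,2\}$. When the two pairs coincide, the known variance $y_{a}y_{b}(y_{a}+y_{b}+1)/12$ of the two-colour Mahonian distribution yields $E[N_{ab}^{2}]=\tfrac{y_{a}^{2}y_{b}^{2}}{4}+\tfrac{y_{a}y_{b}(y_{a}+y_{b}+1)}{12}$, accounting for the $\tfrac{1}{4}\sum y_{i}^{2}y_{j}^{2}$ and $\tfrac{1}{12}(\sum y_{i}^{2}y_{j}+\sum y_{i}y_{j}^{2}+\sum y_{i}y_{j})$ terms. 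When the pairs share exactly one label (three sub-configurations on three distinct labels $h<i<j$, indexed by whether the shared label is $h$, $i$, or $j$), I compute $E[N_{ab}N_{cd}]$ by splitting into the sub-case in which the two same-labelled positions coincide (contributing a rational multiple of $y_{h}y_{i}y_{j}$ via $\binom{n}{3}/[n(n-1)(n-2)]$) and the sub-case in which they are distinct (contributing a multiple of the appropriate squared monomial via $6\binom{n}{4}/[n(n-1)(n-2)(n-3)]=\tfrac{1}{4}$). When $\{a,b\}$ and $\{c,d\}$ are disjoint, conditioning on which positions carry each label pair decouples the two sub-arrangements, giving $E[N_{ab}N_{cd}]=\tfrac{y_{a}y_{b}y_{c}y_{d}}{4}$; summing over the three pairings of a quadruple of labels into two pairs and the two orderings of $(a,b)$ versus $(c,d)$ produces the $\tfrac{3}{2}\sum_{g<h<i<j}y_{g}y_{h}y_{i}y_{j}$ term.

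The main obstacle will be the bookkeeping in the ``one shared label'' case: the three sub-configurations produce the three asymmetric cubic monomials $y_{h}^{2}y_{i}y_{j}$, $y_{h}y_{i}^{2}y_{j}$, $y_{h}y_{i}y_{j}^{2}$ with coefficient $\tfrac{1}{2}$ each, together with $y_{h}y_{i}y_{j}$ corrections of mixed signs coming from both the coincident sub-case and the $(y_{h}-1),(y_{i}-1),(y_{j}-1)$ factors in the distinct sub-case; verifying that these linear-in-each-variable corrections combine to the single $\tfrac{1}{6}\sum_{h<i<j}y_{h}y_{i}y_{j}$ coefficient in~\eqref{mom2} is the delicate piece. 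Once all contributions are assembled and multiplied by $M$, one obtains the first form of~\eqref{mom2}; the alternative form follows by repackaging the strictly-ordered triple and quadruple index sums into sums with coincident indices via inclusion--exclusion on the diagonals $g=h$, $h=i$, $i=j$.
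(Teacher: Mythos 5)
Your argument is correct, and it takes a genuinely different route from the paper. The paper works analytically with the product representation $f(q)=\prod_{i=1}^n[i]_q\big/\prod_r\prod_{i_r=1}^{y_r}[i_r]_q$ of the $q$-multinomial coefficient: it evaluates $\frac{d}{dq}\log[m]_q$ and $\frac{d}{dq}\bigl(q\frac{d}{dq}\log[m]_q\bigr)$ at $q=1$ (getting $\tfrac12(m-1)$ and $\tfrac1{12}(m^2-1)$), assembles these via logarithmic differentiation and the identity $\frac{d}{dq}\bigl(qf'\bigr)=f\bigl(\frac{d}{dq}(q\frac{d}{dq}\log f)+q(\frac{d}{dq}\log f)^2\bigr)$, and then re-expands the resulting power sums in $n$ and the $y_r$ into the monomial sums of the statement. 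You instead read the two identities as $M\,E[I]$ and $M\,E[I^2]$ for the inversion count of a uniformly random multiset permutation and decompose $I=\sum_{a<b}N_{ab}$ into pairwise (Mann--Whitney-type) counts, computing second moments by intersecting label pairs. Your route explains each coefficient of \eqref{mom2} structurally (the $\tfrac14\sum y_i^2y_j^2$ and $\tfrac1{12}$-terms from $E[N_{ab}^2]$, the cubic terms from one shared label, the $\tfrac32$-term from disjoint pairs), whereas the paper's route is more mechanical and extends to higher moments by further differentiation. I checked that your ``delicate piece'' does close up: writing $N_{ab}$ as a sum of indicators over (copy of $b$, copy of $a$) pairs, the coincident-copy subcase gives probability $\tfrac13$ when the shared letter is extremal in both pairs (shared $h$ or shared $j$) and $\tfrac16$ when it is the middle letter $i$, while the distinct-copy subcase always gives $\tfrac14$; this yields $E[N_{hi}N_{hj}]=\tfrac14 y_h^2y_iy_j+\tfrac1{12}y_hy_iy_j$, $E[N_{hj}N_{ij}]=\tfrac14 y_hy_iy_j^2+\tfrac1{12}y_hy_iy_j$, and $E[N_{hi}N_{ij}]=\tfrac14 y_hy_i^2y_j-\tfrac1{12}y_hy_iy_j$, whose doubled sum is exactly $\tfrac12(y_h^2y_iy_j+y_hy_i^2y_j+y_hy_iy_j^2)+\tfrac16 y_hy_iy_j$ as required. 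Two small cautions: your parenthetical ``$\binom{n}{3}/[n(n-1)(n-2)]$'' suggests a uniform $\tfrac16$ for the coincident subcase, which is wrong for the extremal-shared-letter configurations, so make the $\tfrac13$ versus $\tfrac16$ distinction explicit; and the variance $y_ay_b(y_a+y_b+1)/12$ you import is classical (the null variance of the Mann--Whitney statistic) but is essentially the $k=2$ case of \eqref{mom2} itself, so you should either cite it independently or derive it, e.g.\ from the product formula for the Gaussian binomial coefficient, to avoid circularity.
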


For~\eqref{mom1}, the key is to note that the sum 
$\sum_{i\geq 0} i \cdot \inv(y_1, \dots, y_k; i) $ can be
obtained by differentiating the $q$-multinomial co\"efficient with respect to $q$,
and then setting $q=1$:
\[ \sum_{i\geq 0} i \cdot \inv(y_1, \dots, y_k; i)  = \frac{d}{dq} \gp{y_1+\cdots + y_k}
{y_1, \dots, y_k}{q} \Bigg|_{q=1}. \]
\cite[p. 142]{M15}  computed the $k=2, 3$ cases, and then he
recorded the result for general $k$.

For~\eqref{mom2}, we analogously have 
 \[ \sum_{i\geq 0}  i^2\cdot \inv(y_1, \dots, y_k; i)  
 = \frac{d}{dq} \left( q\frac{d}{dq} \gp{y_1+\cdots + y_k}
{y_1, \dots, y_k}{q}\right) \Bigg|_{q=1}. \]  A full proof
is presented as an appendix to this paper, so as not to disrupt the flow of the current
line of thought.

Notice that for $k<4$, Eq.~\eqref{mom2} simplifies considerably
as some of the multisums
are empty.  Specifically,
\begin{equation}
\label{mom2k2}
\sum_{i\geq 0} i^2 \cdot \inv(y_1, y_2; i) 
  =  \binom{y_1+y_2}{y_1,y_2}  \left( 
  \frac{1}{12} \left( y_1 y_2 + y_1^2 y_2 + y_1 y_2^2
  \right) 
  + \frac{1}{4} y_1^2 y_2^2 \right)  
\end{equation}

\begin{multline}
\label{mom2k3}
\sum_{i\geq 0} i^2 \cdot \inv(y_1, y_2, y_3; i) \\
  =  \binom{y_1+y_2+y_3}{y_1,y_2,y_3}  \left( 
  \frac{1}{12}  \left( \sum_{1\leq i < j \leq 3} y_i y_j 
  + \sum_{1\leq i < j \leq 3} y_i^2 y_j
  + \sum_{1\leq i < j \leq 3} y_i y_j^2 \right) \right. \\ \left.
  + \frac{1}{4} \sum_{1\leq i < j \leq 3} y_i^2 y_j^2 
  + \frac{1}{6} y_1 y_2 y_3 
  + \frac{1}{2} \left( y_1^2 y_2 y_3 
  + y_1 y_2^2 y_3
  + y_1 y_2 y_3^2 \right)\right),  
\end{multline}

\section{The marginal distribution of $I$}
\subsection{The probability mass function and a possible limiting distribution}
The marginal distribution of $I$ is given by
\begin{multline} \label{pmfI}
 P(I = i) = \sum_{(y_1, \dots, y_k)\in\mathscr{C}_k } \inv(y_1,\dots,y_k; i) 
   p_1^{y_1} \cdots p_k^{y_k},  \\ 
   \mbox{ for $i = 0, 1, 2, \dots , \lfloor (k-1)n^2/(2k) \rfloor$},
\end{multline} and $0$ otherwise.
 
   One referee commented that it was not easy to compute directly with Eq.~\eqref{pmfI},
and that it would be nice to have an asymptotic distribution for $I$.
The author worked at this for a while and encountered difficulties, so he decided to
consider an easier but related problem: that of finding an asymptotic distribution for $I$
in the case where all $p_j$ are equal, i.e. $p_1 = p_2 = \cdots = p_k = 1/k$.
In this case~\eqref{pmfI} simplifies (slightly) to
\begin{multline} \label{pmfIeq}
 P(I = i) = \frac{1}{k^n}\sum_{(y_1, \dots, y_k)\in\mathscr{C}_k } \inv(y_1,\dots,y_k; i)  \\ 
   \mbox{ for $i = 0, 1, 2, \dots , \lfloor (k-1)n^2/(2k) \rfloor$},
\end{multline} and $0$ otherwise;
but the problem of counting the inversions, which appear as the co\"efficients of $q^i$ 
in the $q$-multinomial co\"efficient $\gp{n}{y_1, \dots, y_k}{q}$ remains, and is far from trivial.
Since the constant term in $\gp{n}{y_1, \dots, y_k}{q}$ is $1$ for all $n$ and $k$, we may
deduce that
\[ P(I=0) = \frac{1}{k^n} \binom{n+k-1}{n}, \]
but for $i>0$, $P(I=i)$ seems inaccessible for symbolic $n$ and $k$.  Of course, for specific
numeric $n$ and $k$, the computation of the full pmf for $I$ presents no difficulty.

Extensive computations performed using Mathematica, and the resulting graphs for the
pmf of $I$ in the case of equal probabilities for all $k$ outcomes of each trial strongly suggest
that as $n$ increases, while $k$ is left fixed, the pmf of $I$ tends toward a normal distribution.
Specifically, it appears that
\begin{equation} \label{conj}
  \frac{\sqrt{n}}{\binom n2} \frac{(I - \mu)}{\sigma} \overset{d}{\longrightarrow} N(0,1),
\end{equation} where
\begin{align}
\mu &= \binom{n}{2} \binom{k}{2} \frac{1}{k^2} = \frac{n(n-1)(k-1)}{4k},\label{meaneq}\\
\sigma^2 & = \frac{(k-1)(k+1)(n-1)n(2n+5)}{72k^2} .  \label{vareq}
\end{align}

Of course, both $\mu$ and $\sigma$ tend to $\infty$ as $n$ increases without bound, but the
preceding can be used to visualize an approximation to the distribution of $I$ for a large, fixed $n$.

The same referee suggested that it may be possible to find a result along the lines of
Theorem 5.2 in~\cite{C21}, when $n$ and $k$ are proportional.

\subsection{Moments of $I$}
 \begin{theorem} \label{MomsI}  Let $y_1, y_2,\dots, y_k$ denote fixed nonnegative integers,
 and let $n = y_1 + \cdots + y_k$.
 The first two raw moments of $I$ are
 \begin{equation}
 E(I) = \binom n2  \sum_{1\leq i< j \leq k} p_i p_j .
\end{equation}  and

\begin{multline}
E(I^2)=\binom n2 \sum_{1\leq i<j \leq k} p_i p_j 
   + 2\binom n3 \sum_{1 \leq i<j \leq k} \left( p_i^2 p_j + p_i p_j^2 \right) 
   + 6\binom n4 \sum_{1 \leq i<j \leq k} p_i^2 p_j^2       \\
   + 10\binom n3 \sum_{1 \leq h<i<j \leq k} p_h p_i p_j 
   + 36\binom n4 \sum_{1\leq g<h<i<j \leq k} p_g p_h p_i p_j \\
  + 12\binom n4 \sum_{1\leq h<i<j \leq k} 
    \left( p_h^2 p_i p_j + p_h p_i^2 p_j + p_h p_i p_j^2 \right). 
\end{multline}

Thus the variance of $I$ is given by
\begin{align}
V(I) &= \binom n2 \sum_{1\leq i<j \leq k} p_i p_j 
   + 2\binom n3 \sum_{1 \leq i<j \leq k} \left( p_i^2 p_j + p_i p_j^2 \right)  \\
   &\qquad -  \binom n2 (2n-3)  \sum_{1 \leq i<j \leq k} p_i^2 p_j^2       
   + 10\binom n3 \sum_{1 \leq h<i<j \leq k} p_h p_i p_j \notag\\
   & \qquad - 6\binom n2 (2n-3) \sum_{1\leq g<h<i<j \leq k} p_g p_h p_i p_j \notag\\
  & \qquad - 2\binom n2 (2n-3) \sum_{1\leq h<i<j \leq k} 
    \left( p_h^2 p_i p_j + p_h p_i^2 p_j + p_h p_i p_j^2 \right) .   \notag
 \end{align}   
 \end{theorem}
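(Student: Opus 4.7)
The plan is to compute $E(I)$ and $E(I^2)$ directly from the joint pmf~\eqref{YIpmf}. Writing
\[
E(I^m) = \sum_{\yv \in \Comp_k(n)} \sum_{i \geq 0} i^m \, \inv(y_1,\dots,y_k;i) \, p_1^{y_1} \cdots p_k^{y_k}
\]
and applying Lemma~\ref{lem} to the inner sum on $i$ transforms the problem into evaluating mixed moments of the classical multinomial distribution $\Yv \sim \Mult(n;p_1,\dots,p_{k-1})$. Throughout, I invoke the standard factorial-moment identity $E_{\Mult}\bigl[\,Y_{i_1}^{(r_1)} \cdots Y_{i_s}^{(r_s)}\bigr] = \frac{n!}{(n-r)!} \, p_{i_1}^{r_1} \cdots p_{i_s}^{r_s}$ valid for distinct indices $i_1,\dots,i_s$, where $r = r_1 + \cdots + r_s$ and $Y^{(r)} = Y(Y-1)\cdots(Y-r+1)$. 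For $m=1$, formula~\eqref{mom1} immediately gives $E(I) = \tfrac{1}{2}\sum_{i<j} E_{\Mult}[Y_i Y_j] = \binom{n}{2}\sum_{i<j} p_i p_j$, since $E_{\Mult}[Y_i Y_j] = n(n-1)\, p_i p_j$ for $i \neq j$.

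For $m=2$, I substitute the nine-term expansion of~\eqref{mom2} and recognize the outer sum as a linear combination of expressions of the form $\sum E_{\Mult}[Y_i^a Y_j^b]$, $\sum E_{\Mult}[Y_h^a Y_i^b Y_j^c]$, and $\sum E_{\Mult}[Y_g Y_h Y_i Y_j]$. Each ordinary-power moment expands into a sum of factorial moments by writing $Y^2 = Y^{(2)} + Y$, yielding for example $E_{\Mult}[Y_i^2 Y_j] = n(n-1)(n-2)\, p_i^2 p_j + n(n-1)\, p_i p_j$ and $E_{\Mult}[Y_i^2 Y_j^2] = n(n-1)(n-2)(n-3)\, p_i^2 p_j^2 + n(n-1)(n-2)(p_i^2 p_j + p_i p_j^2) + n(n-1)\, p_i p_j$, while $E_{\Mult}[Y_h^2 Y_i Y_j] = n(n-1)(n-2)(n-3)\,p_h^2 p_i p_j + n(n-1)(n-2)\,p_h p_i p_j$ and $E_{\Mult}[Y_g Y_h Y_i Y_j] = n(n-1)(n-2)(n-3)\, p_g p_h p_i p_j$. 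Collecting contributions by monomial type in the $p_i$'s and rewriting each product $n(n-1)\cdots(n-r+1)=r!\binom{n}{r}$ produces the six-term expression for $E(I^2)$ stated in the theorem.

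The variance then follows from $V(I) = E(I^2) - E(I)^2$. To expand the square, I split the double sum according to whether the two unordered pairs coincide, share one index, or are disjoint, obtaining
\[
\Bigl(\sum_{i<j} p_i p_j\Bigr)^{\!2} = \sum_{i<j} p_i^2 p_j^2 + 2\!\!\sum_{h<i<j}\!(p_h^2 p_i p_j + p_h p_i^2 p_j + p_h p_i p_j^2) + 6\!\!\sum_{g<h<i<j}\!p_g p_h p_i p_j.
\]
The lower-degree terms of $E(I^2)$ (those of degrees $2$ and $3$ in the $p_i$'s) pass through to $V(I)$ unchanged because $E(I)^2$ is homogeneous of degree~$4$; the three degree-four groupings collapse via the single identity $6\binom{n}{4} - \binom{n}{2}^2 = -\binom{n}{2}(2n-3)$ together with its scalar multiples by~$2$ and by~$6$, which is exactly the coefficient pattern $-\binom{n}{2}(2n-3),\ -2\binom{n}{2}(2n-3),\ -6\binom{n}{2}(2n-3)$ appearing in the statement.

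The main obstacle is not any single conceptual step but the sheer volume of bookkeeping: the second-moment calculation requires tracking six distinct monomial patterns in the $p_i$'s as they accumulate contributions from nine separate terms in~\eqref{mom2}, several of which further split once ordinary-power multinomial moments are reduced to factorial moments. To minimize error, I would organize the work as a matrix with rows indexed by the terms of~\eqref{mom2} and columns by the target monomial patterns $\sum p_i p_j$, $\sum(p_i^2 p_j + p_i p_j^2)$, $\sum p_i^2 p_j^2$, $\sum p_h p_i p_j$, $\sum(p_h^2 p_i p_j + p_h p_i^2 p_j + p_h p_i p_j^2)$, and $\sum p_g p_h p_i p_j$, reducing the final collection step to a routine numerical summation.
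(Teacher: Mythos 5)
Your proposal is correct and follows essentially the same route as the paper: interchange the sums, apply Lemma~\ref{lem} to the inner sum over $i$, evaluate the resulting multinomial mixed moments (which the paper leaves implicit but which your factorial-moment identities supply correctly), and obtain $V(I)$ via the same expansion of $\bigl(\sum_{i<j}p_ip_j\bigr)^2$ and the identity $6\binom{n}{4}-\binom{n}{2}^2=-\binom{n}{2}(2n-3)$. All of the stated moment formulas and the final coefficient collection check out.
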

 
 \begin{proof}
 \begin{align*}
   E(I) & = \sum_{i\geq 0} i \cdot P(I=i) \\
           & = \sum_{i\geq 0}  \sum_{\yv\in\Comp_k(n)} i \ \inv(y_1, \dots, y_k; i)
              p_1^{y_1} \cdots p_k^{y_k} \\
           & =\sum_{\yv\in\Comp_k(n)} p_1^{y_1} \cdots p_k^{y_k}  \sum_{i \geq 0} 
              i \ \inv(y_1, \dots, y_k; i) \\
           & =\sum _{\yv\in\Comp_k(n)} \frac 12 p_1^{y_1}\cdots p_k^{y_k}  
             \binom{n}{y_1, \dots, y_k} \sum_{1 \leq i < j \leq k} y_i y_j \\
           & = \binom n2  \sum_{1\leq i< j \leq k} p_i p_j .
 \end{align*}
 \end{proof}

 \begin{multline*}
 E(I^2)  = \sum_{i\geq 0} i^2 \cdot P(I=i) \\
            = \sum_{i\geq 0}  \sum_{\yv\in\Comp_k(n)} i^2 \ \inv(y_1, \dots, y_k; i)
              p_1^{y_1} \cdots p_k^{y_k} \\
            =\sum_{\yv\in\Comp_k(n)} p_1^{y_1} \cdots p_k^{y_k}  \sum_{i \geq 0} 
              i^2\ \inv(y_1, \dots, y_k; i) \\
           = \sum_{\yv\in\Comp_k(n)} p_1^{y_1} \cdots p_k^{y_k} 
           \binom{y_1+\cdots+y_k}{y_1,\dots,y_k}  \left( 
  \frac{1}{12}  \sum_{1\leq i < j \leq k} y_i y_j 
  + \frac{1}{12} \sum_{1\leq i < j \leq k} y_i^2 y_j
  + \frac{1}{12} \sum_{1\leq i < j \leq k} y_i y_j^2 \right. \\
  + \frac{1}{4} \sum_{1\leq i < j \leq k} y_i^2 y_j^2 
  + \frac{1}{6} \sum_{1\leq h<i<j\leq k} y_h y_i y_j  
  + \frac{1}{2} \sum_{1\leq h<i<j\leq k} y_h^2 y_i y_j \\
  +\
   \left. \frac{1}{2} \sum_{1\leq h<i<j\leq k} y_h y_i^2 y_j
    + \frac{1}{2} \sum_{1\leq h<i<j \leq k} y_h y_i y_j^2 
    + \frac{3}{2} \sum_{1\leq g<h<i<j \leq k} y_g y_h y_i y_j \right)\\
 =\binom n2 \sum_{1\leq i<j \leq k} p_i p_j 
   + 2\binom n3 \sum_{1 \leq i<j \leq k} \left( p_i^2 p_j + p_i p_j^2 \right) 
   + 6\binom n4 \sum_{1 \leq i<j \leq k} p_i^2 p_j^2       \\
   + 10\binom n3 \sum_{1 \leq h<i<j \leq k} p_h p_i p_j 
   + 36\binom n4 \sum_{1\leq g<h<i<j \leq k} p_g p_h p_i p_j \\
  + 12\binom n4 \sum_{1\leq h<i<j \leq k} 
    \left( p_h^2 p_i p_j + p_h p_i^2 p_j + p_h p_i p_j^2 \right). 
 \end{multline*}

Noting that 
\begin{multline}
\left( \sum_{1\leq i<j \leq k} p_i p_j \right)^2 \\ =
 \sum_{1\leq i < j \leq k} p_i^2 p_j^2 
+2\sum_{1\leq h<i<j \leq k} \left( p_h^2 p_i p_j + p_h p_i^2 p_j + p_h p_i p_j^2 \right)
+6\sum_{1\leq g<h<i<j \leq k} p_g p_h p_i p_j,
\end{multline}

\begin{multline}
(E(I))^2 = \binom{n}{2}^2 \left( 
\sum_{1\leq i < j \leq k} p_i^2 p_j^2 
+2\sum_{1\leq h<i<j \leq k} \left( p_h^2 p_i p_j + p_h p_i^2 p_j + p_h p_i p_j^2 \right)
\right. \\
\left. +6\sum_{1\leq g<h<i<j \leq k} p_g p_h p_i p_j  \right).
\end{multline}
Thus,
\begin{align*}
V(I) &= E(I^2) - \big( E(I) \big)^2\\
       &= \binom n2 \sum_{1\leq i<j \leq k} p_i p_j 
   + 2\binom n3 \sum_{1 \leq i<j \leq k} \left( p_i^2 p_j + p_i p_j^2 \right)  \\
   &\qquad + \left( 6\binom n4 - \binom n2^2 \right) \sum_{1 \leq i<j \leq k} p_i^2 p_j^2       
   + 10\binom n3 \sum_{1 \leq h<i<j \leq k} p_h p_i p_j \\
   & \qquad + \left( 36\binom n4 - 6\binom n2^2 \right) \sum_{1\leq g<h<i<j \leq k} p_g p_h p_i p_j \\
  & \qquad + \left(12\binom n4 - 2\binom n2^2 \right) \sum_{1\leq h<i<j \leq k} 
    \left( p_h^2 p_i p_j + p_h p_i^2 p_j + p_h p_i p_j^2 \right)\\
  &= \binom n2 \sum_{1\leq i<j \leq k} p_i p_j 
   + 2\binom n3 \sum_{1 \leq i<j \leq k} \left( p_i^2 p_j + p_i p_j^2 \right)  \\
   &\qquad -  \binom n2 (2n-3)  \sum_{1 \leq i<j \leq k} p_i^2 p_j^2       
   + 10\binom n3 \sum_{1 \leq h<i<j \leq k} p_h p_i p_j \\
   & \qquad - 6\binom n2 (2n-3) \sum_{1\leq g<h<i<j \leq k} p_g p_h p_i p_j \\
  & \qquad - 2\binom n2 (2n-3) \sum_{1\leq h<i<j \leq k} 
    \left( p_h^2 p_i p_j + p_h p_i^2 p_j + p_h p_i p_j^2 \right) .   
\end{align*}

\section{Conditional distribution of $I$ given $\Yv$.}
Given a vector $\yv = (y_1, y_2, \dots, y_k)$, 
with $n = y_1 + y_2 + \cdots + y_k$,
the conditional pmf of 
$I$ given $\Yv = \yv$ is
\begin{equation}
P(I = i \ |\ \Yv = \yv) = \frac{ \inv(y_1, y_2, \dots, y_k; i )}
{\binom{n}{y_1, y_2, \dots, y_k}}
\end{equation} for $i = 0,1,2,\dots, \sum_{1\leq i<j\leq k} y_i y_j $; and 0
otherwise.

The conditional expectation is
\begin{align}
E(I=i\ |\ \Yv = \yv ) & = \binom{n}{y_1, \dots, y_k}^{-1} 
\sum_{i\geq 0} i\cdot \inv(y_1, \dots, y_k; i) \\
& = \frac{1}{2} \sum_{1\leq i<j \leq k} y_i  y_j, \notag
\end{align}
by Eq.~\eqref{mom1}.

\section{Application}
Recording the counts of the $k$ outcomes of a multinomial experiment provides valuable 
information, but does not provide \emph{all} information we might need to know.
For example, suppose that a standard six-sided die is rolled $60$ times, and it turns out
that exactly $10$ of the sixty rolls showed $j$ dots, for $j = 1, 2, 3, 4, 5, 6$.  Using that
information alone we would strongly suspect that the die was ``fair.''  However, if we
looked further and noted that the sequence of rolls was in fact
\begin{equation} \label{forw}
 \{1,1,\dots,1, 2,2,\dots,2, 3,3,\dots, 3, 4,4,\dots,4, 5,5,\dots, 5, 6,6,\dots, 6 \} 
 \end{equation}
then we would certainly think that something unexpected was happening with the die
after all.
The preceding sequence of die rolls has an inversion number of $I=0$.
Of course, the opposite extreme
\begin{equation} \label{backw}
 \{6,6,\dots,6, 5,5,\dots,5, 4,4,\dots, 4, 3,3,\dots,3, 2,2,\dots, 2, 1,1,\dots, 1 \} 
\end{equation}
would be just as suspicious.  This latter sequence has an inversion number of $I=1500$,
the maximum possible.

In contrast, a sequence of die rolls (still assuming $10$ of each of the six possible outcomes),
of around $750$, the expected value of $I$ given $\Yv = (10,10,10,10,10,10)$, might provide 
some assurance that the occurrences of the various outcomes were ``well-mixed'' as we
might expect in a truly fair die. 

For a given value $\yv$ of the multinomial random variable $\Yv$, the possible values of $I$
are from $0$ through $\sum_{1\leq i< j \leq k} y_i y_j$; it would be handy to adjust the 
value of $I$ into a scale ranging from $-1$ to $1$, with $1$ indicating that all occurrences
of outcome $1$ occur first, followed by all occurrences of outcome $2$, etc.; and with $-1$
indicating the exact reverse.  The simplest way to do this is via the transformation

 \[ H := 1 - \frac{2}{\sum_{1\leq i<j \leq k} y_i y_j} I. \]

 In this way, the sequence~\eqref{forw} corresponds to $H=1$ and the sequence~\eqref{backw}
 to $H = -1$.  
  Clearly, $E(H) = 0$.  

\section*{Acknowledgments}
The author thanks Doron Zeilberger for suggesting that he consider the
major index of a permutation during a discussion related to~\cite{S21}.  This
in turn led the author to further consider the inversion statistic of a permutation,
which in turn led to the refinement of the multinomial distribution presented in the
present manuscript.   The author further thanks the anonymous referees who
patiently provided
helpful suggestions and guidance which improved the paper, including catching
a calculation error in an earlier version.


\bibliographystyle{natbib-harv}

\appendix 
\section{Proof of Lemma~\ref{lem}}
\subsection{Some notation and elementary results}
In the theory of $q$-series, the $q$-analog $[m]_q$ of a nonnegative integer
$m$ is given by \begin{equation} \label {qint} [m]_q = 1 + q + q^2 + \cdots + q^{m-1}; 
\end{equation}
see~\citet[p. 7ff]{GR04}.
Notice $[m]_q \to m$ when $q$ is set equal to $1$.
Note further each of the following consequences:
\begin{align}
\frac{d}{dq} [m]_q & = 1 + 2q + 3q^2 + \cdots + (m-1)q^{m-2} ; \\
\frac{d}{dq}\log [m]_q & = \frac{1 + 2q + 3q^2 + \cdots + (m-1)q^{m-2}}{1+q+q^2+
\cdots+q^{m-1}} \\
  & \notag \qquad \overset{q=1}{\longrightarrow} \frac{\binom{m}{2}}{m}  = \frac 12 (m-1);
\\
\frac{d}{dq} \left( q \frac{d}{dq}\log [m]_q \right) &=
\frac{d}{dq} \frac{q + 2q^2+ 3q^3 + \cdots + (m-1) q^{m-1}}{1+q+q^2+\cdots+q^{m-1}} \\
& = \frac{ [m]_q \sum_{i=0}^{m-2} (i+1)^2 q^i -
 \left( \sum_{j=1}^{m-1} jq^j \right) \left(\sum_{r=0}^{m-2}
  (r+1)q^r \right)} { \left( [m]_q \right)^2} \notag\\
&\notag \qquad \overset{q=1}{\longrightarrow}
  \left(  m\frac{ (2m-1)m(m-1)}{6} - \frac{ m^2(m-1)^2}{4} \right) \frac{1}{m^2} \\
  &\qquad\quad= \frac{1}{12}(m^2 - 1). \notag
 \end{align}

\subsection{Proof of Lemma~\ref{lem}}
Let $y_1, y_2, \dots, y_k$ denote fixed nonnegative integers, let
$n = y_1 + y_2 + \cdots + y_k$, and let
\[ f = f(q) = \gp{y_1 + \cdots + y_k}{y_1, \dots, y_k}{q} = 
\prod_{i=1}^n [i]_q \div \prod_{r=1}^k \prod_{i_r=1}^{y_r} [i_r]_q . \]
This latter expression for the $q$-multinomial co\"efficient is equivalent to
that which was given in~\eqref{qmc}, but is more convenient for our present
purposes.

It follows that
\begin{equation} \label{log}
  \log f = \sum_{i=1}^n \log [i]_q - \sum_{r=1}^k \sum_{i_r=1}^{y_r} \log [i_r]_q
\end{equation} and thus
\begin{equation}
  \frac{d}{dq} \log f = \sum_{i=1}^n \frac{ \frac{d}{dq} [i]_q}{[i]_q} 
- \sum_{r=1}^k \sum_{i_r=1}^{y_r} \frac{ \frac{d}{dq} [i_r]_q }{[i_r]_q }.
\end{equation}

Using logarithmic differentiation, we find that
\begin{equation} \label{logdiff}
 \frac{df}{dq} = f \frac{d}{dq} \log f,
\end{equation}
and then applying Leibniz' rule, 
\begin{equation} \label{dqd}
  \frac{d}{dq}\left( q \frac{df}{dq} \right) =
  f \left( \frac{d}{dq} \left( q \frac{d}{dq}\log f \right) + q \left( \frac{d}{dq} \log f\right)^2
  \right).
\end{equation}

We proceed to evaluate the expressions contained in the right members of
\eqref{logdiff} and \eqref{dqd} at $q=1$.

\begin{align*}
\frac{d}{dq} \log f &= \sum_{i=1}^n \frac{d}{dq}\log [i]_q - 
\sum_{r=1}^k \sum_{i_r=1}^{y_r} \frac{d}{dq} \log [i_r]_q \\
 & \overset{q=1}{\longrightarrow}  
 \frac12\sum_{i=1}^n (i-1) - \frac12 \sum_{r=1}^k \sum_{i_r=1}^{y_r}
 (i_r-1) \\
 & = \frac 12 \left( \binom{n+1}{2} - n \right) 
 - \frac 12 \left(  \sum_{r=1}^k \binom{y_r + 1}{2} - y_r \right) \\
  & = \frac 14 \left( n(n+1) - y_1(y_1+1) - y_2(y_2+1) - \cdots - y_k(y_k+1) \right)\\
  & = \frac 12 \sum_{1\leq i < j \leq k} y_i y_j.
\end{align*}
Thus, by~\eqref{logdiff},
\begin{equation} \label{moment1} 
\sum_{i\geq 0} i\cdot\inv(y_1, \dots, y_k ;i) 
= q\frac{df}{dq} = \frac 12 \binom{n}{y_1, \dots, y_k} \sum_{1\leq i<j \leq k} y_i y_j, 
\end{equation}  which is~\eqref{mom1} of Lemma~\ref{lem}.

Next, we need to compute $\frac{d}{dq}\left( q \frac{d}{dq} \log f \right)$ at $q=1$.

\begin{align*}
&\phantom{====} \frac{d}{dq} \left( q \frac{d}{dq} \log f \right) \\ &= 
   \sum_{i=1}^n \frac{d}{dq} \left( q \frac{d}{dq} \log [i]_q  \right)
 - \sum_{r=1}^k \sum_{i_r=1}^{y_r} \frac{d}{dq} \left( q \frac{d}{dq} \log [i_r]_q  \right) \\
 &\overset{q=1}{\longrightarrow} \frac{1}{12} \left\{  
 \left(\frac{n(n+1)(2n+1)}{6} - n\right) - \sum_{r=1}^k\left( \frac{y_r(y_r+1)(2y_r+1)}{6}
 - y_r \right) \right\} \\
&=  \frac{1}{12} \left\{ \sum_{1\leq i < j \leq k} y_i y_j + \sum_{1\leq i < j \leq k} y_i^2 y_j
  + \sum_{1\leq i < j \leq k} y_i y_j^2 + 2 \sum_{1\leq h<i<j\leq k} y_h y_i y_j \right\}.
\end{align*}

So, we have
\begin{multline} \label{moment2}
\sum_{i\geq 0} i^2 \cdot \inv(y_1, \dots, y_k; i) = \frac{d}{dq} \left(q \frac{df}{dq} \right) 
 \\ = f \left( \frac{d}{dq} \left( q \frac{d}{dq} \log f \right) + q \left( \frac{d}{dq} \log f \right)^2
 \right) \\
 = \binom{y_1+\cdots+y_k}{y_1,\cdots,y_k}  \left( 
  \frac{1}{12} \left\{ \sum_{1\leq i < j \leq k} y_i y_j + \sum_{1\leq i < j \leq k} y_i^2 y_j
  + \sum_{1\leq i < j \leq k} y_i y_j^2 + \right. \right. \\
  \left. \left. 2\cdot\sum_{1\leq h<i<j\leq k} y_h y_i y_j \right\}  + \frac 14
 \left( \sum_{1\leq i<j\leq k} y_i y_j  \right)^2 \right) \\
= \binom{y_1+\cdots+y_k}{y_1,\cdots,y_k}  \left( 
  \frac{1}{12}  \sum_{1\leq i < j \leq k} y_i y_j 
  + \frac{1}{12} \sum_{1\leq i < j \leq k} y_i^2 y_j
  + \frac{1}{12} \sum_{1\leq i < j \leq k} y_i y_j^2 \right.  \\
  + \frac{1}{4} \sum_{1\leq i < j \leq k} y_i^2 y_j^2 
  + \frac{1}{6} \sum_{1\leq h<i<j\leq k} y_h y_i y_j  
  + \frac{1}{2} \sum_{1\leq h<i<j\leq k} y_h^2 y_i y_j \\+
      \left. \frac{1}{2} \sum_{1\leq h<i<j\leq k} y_h y_i^2 y_j
    + \frac{1}{2} \sum_{1\leq h<i<j \leq k} y_h y_i y_j^2 
    + \frac{3}{2} \sum_{1\leq g<h<i<j \leq k} y_g y_h y_i y_j \right),
\end{multline} which is~\eqref{mom2} of Lemma~\ref{lem}. \qed

\end{document}